
\documentclass[preprint,12pt]{elsarticle}




\usepackage{lipsum} 
\usepackage{hyperref} 
\usepackage{geometry} 
\usepackage{graphicx}
\usepackage{multirow}
\usepackage{amsmath,amssymb,amsfonts}
\usepackage{amsthm}
\usepackage{mathrsfs}
\usepackage[title]{appendix}
\usepackage{xcolor}
\usepackage{textcomp}
\usepackage{manyfoot}
\usepackage{booktabs}
\usepackage{algorithm}
\usepackage{algorithmicx}
\usepackage{algpseudocode}
\usepackage{listings}
\usepackage{cleveref}
\usepackage{hyperref}
\theoremstyle{plain}
\newtheorem{theorem}{Theorem}[section]
\newtheorem{proposition}[theorem]{Proposition}

\newtheorem{lemma}[theorem]{Lemma}

\theoremstyle{remark} 
\newtheorem{note}{Note}[section]
\theoremstyle{definition}

\newtheorem{remark}{Remark}[section]
\newtheorem{definition}{Definition}[section]


\begin{document}

\begin{frontmatter}



\title{Exact Null Controllability of Non-Autonomous Conformable Fractional Semi-Linear Systems with Nonlocal Conditions}


\author[inst1]{Dev Prakash Jha}
\ead{devprakash.22@res.iist.ac.in}
\affiliation[inst1]{organization={Department of Mathematics, Indian Institute of Space Science and Technology},
            addressline={Valiamala P.O.}, 
            city={Thiruvananthapuram},
            postcode={695547}, 
            state={Kerala},
            country={India}}
\author[inst1]{Raju K. George}
\ead{george@iist.ac.in}
\begin{abstract}
We study the exact null controllability of a class of non-autonomous conformable fractional semi-linear evolution systems with nonlocal initial conditions in Hilbert spaces. The analysis is carried out within the framework of conformable fractional calculus and linear evolution operator theory. Under suitable assumptions, we establish the existence of mild solutions and provide sufficient conditions for exact null controllability. Notably, the nonlocal term is allowed to be continuous without requiring compactness or Lipschitz-type conditions. An example is included to illustrate the applicability of the main results.
\end{abstract}




\begin{keyword}
Exact null controllability \sep Non-autonomous systems \sep Evolution operators  \sep Semigroup theory \sep Fractional differential equations \sep Mild solution  \sep   Schauder's fixed-point theorem 
\MSC 26A33 \sep 93B05 \sep 35K58 \sep 34H99 \sep 35C99
\end{keyword}

\end{frontmatter}



\section{Introduction}
In this study, we explore the controllability of a specific class of non-autonomous fractional parabolic differential equations subject to nonlocal conditions within Hilbert spaces $Z$. Our investigation centers on systems modeled by the following nonlinear fractional differential equation:

\begin{equation}
\left.
\begin{array}{l}
\left( T_{\alpha} x \right)(t) + A(t)x(t) = Bu(t) + F(t,x(b(t)) ), \quad \alpha \in (0,1] \\
x(\zeta)+g(x) = x_{0}, \quad t_1  \leq t \leq t_{2} < \infty,\quad \zeta \in [t_0,t_f] \cap [t_1,t_2], 
\end{array}
\right\}
\tag{P}
\label{eq:P}
\end{equation}

where,

\begin{itemize}
    \item The function \( x(t) \) denotes the system's state in the space \( Z \);
    \item The control input \( u(t) \) lies in the space \( L^2([\zeta, t_2]; U) \), which consists of admissible control functions, where \( U \) is a real Hilbert space;
    \item The conformable fractional derivative of a function \( f \) of order \( \alpha \) at  \( t \), written as \( \left(T_{\alpha} f\right)(t) \), is defined by
    $$
    \left( T_{\alpha} f \right)(t) = \lim_{\epsilon \rightarrow 0} \frac{f\left(t + \epsilon(t)^{1 - \alpha}\right) - f(t)}{\epsilon}, \quad \text{for } t > 0;
    $$
    \item Let \( A(t): Z \rightarrow Z^* \) be a closed, possibly unbounded linear operator. Following the result by Jha~\cite{jha2024existence}, the operator family \( \{A(t)\}_{\frac{t^\alpha}{\alpha} \in [t_0, t_f]} \) generates an almost strongly continuous evolution operator \( \Psi_\alpha: \mathcal{D} \rightarrow \text{BL}(Z) \), where
    \[
    \mathcal{D} = \left\{ \left( \frac{t^\alpha}{\alpha}, \frac{s^\alpha}{\alpha} \right) \in [t_0, t_f]^2 : t_0 \leq \frac{s^\alpha}{\alpha} \leq \frac{t^\alpha}{\alpha} \leq t_f \right\},
    \]
    and \( \text{BL}(Z) \) represents the space of all bounded linear operators \( \Psi_\alpha(t, s) \) on \( Z \), satisfying \( \| \Psi_\alpha(t, s) \| \leq M \) whenever \( t > s \);
    \item The operator \( B \) is a bounded linear map from \( U \) to \( Z \);
    \item The function \( b(t) \) is an element of the space \( \mathbb{C}([\zeta, t_2]; [\zeta, t_2]) \), which consists of continuous functions over the interval.
\end{itemize}

\begin{note}
In the following section, we reformulate the system (\ref{eq:P}) in such a way that
\[
\frac{t^\alpha}{\alpha} \in [t_0, t_f] \iff t \in \left[(\alpha t_0)^{\frac{1}{\alpha}}, (\alpha t_f)^{\frac{1}{\alpha}}\right] \iff t \in [t_1, t_2],
\]
where \( t_1 = (\alpha t_0)^{1/\alpha} \) and \( t_2 = (\alpha t_f)^{1/\alpha} \).\\
This change is justified since the time variable \( t \) is taken to be a positive real number.
\end{note}

Byszewski~\cite{byszewski1991theorems} analyzed the nonlocal Cauchy problem and underscored the relevance of nonlocal conditions in numerous applications, as detailed in~\cite{byszewski1991theorems,deng1993exponential} and the references therein. For instance, in~\cite{deng1993exponential}, the diffusion process of a small quantity of gas inside a transparent tube is modeled using the following expression:

 \begin{equation}\label{eq:g(x)}
     g(x) = \sum_{i=1}^{q} c_i x(t'_i),
 \end{equation}
where \( c_i \), for \( i = 1, 2, \dots, q \), are constants and \( \zeta  < t'_1<t'_2 < \dots < t'_q < a \). In this framework, Eq.~(\ref{eq:g(x)}) incorporates additional data points measured at \( t'_i \), for \( i = 1,2, \dots, q \). Over the past several years, a significant number of researchers have investigated the existence, regularity, stability, and (approximate) controllability of abstract differential and functional differential Cauchy evolution problems subject to nonlocal conditions (see \cite{ezzinbi2007existence,fu2011approximate,guo2004controllability,liang2004nonlocal,boucherif2009semilinear,chen2013monotone,xiao2005existence,wang2017approximate,mahmudov2008approximate} ).

This study is devoted to addressing the problem of exact null controllability for the non-autonomous functional evolution system described by (\ref{eq:P}) under nonlocal constraints. We begin by establishing a boundedness property of the operator \( L_\xi^{-1} N_\xi^{t_2} \), drawing on techniques from Dauer \cite{dauer2004exact}. This foundational result facilitates our subsequent analysis of the system's exact null controllability, employing the framework of linear evolution operator theory. Notably, the nonlocal condition considered here is defined as a constant function within a narrow interval \( U(t = \zeta; \delta) \). As such, we do not impose the usual compactness or Lipschitz assumptions on the function \( g \) involved in the nonlocal term. Instead, we assume that \( g \) is continuous and explicitly defined over the interval \( [\delta, a] \), where \( \delta > 0 \) is small. Such conditions are commonly replaced by compactness or Lipschitz constraints in much of the existing literature---see, e.g., \cite{ezzinbi2007existence,mahmudov2008approximate,fu2006existence,guo2004controllability}.

This paper is structured as follows: Section~\ref{sec:Preliminaries} provides essential definitions and foundational results. In Section~\ref{sec:Exact Null Controllability}, we introduce key notations, assumptions, and preliminary findings concerning linear evolution operators and the exact null controllability of equation~(\ref{eq:P}). Lastly, Section~\ref{sec:Application} illustrates the practical relevance of the theoretical results through a concrete example.

\section{Preliminaries}

\label{sec:Preliminaries}
\begin{definition}(Conformable Fractional Derivative \cite{bib1})  
Let \(f: [0, \infty) \rightarrow \mathbb{R}\) be a real-valued function. The conformable fractional derivative of order \(\alpha \in (0,1]\) is defined by  
\begin{equation}  
T_{\alpha}(f)(t) = \lim_{\epsilon \to 0} \frac{f(t + \epsilon t^{1 - \alpha}) - f(t)}{\epsilon}, \quad t > 0.  
\label{eq:P2}  
\end{equation}  

If this limit exists, the function \(f\) is called \(\alpha\)-differentiable at \(t\), and the derivative is denoted as \(f^{(\alpha)}(t) = T_{\alpha}(f)(t)\). Furthermore, if \(f\) is \(\alpha\)-differentiable on an interval \((0, a)\) for some \(a > 0\), and the limit \(\lim_{t \to 0^{+}} f^{(\alpha)}(t)\) exists, then we define \(f^{(\alpha)}(0) = \lim_{t \to 0^{+}} f^{(\alpha)}(t)\).
\end{definition}

\begin{definition}(Left Conformable Fractional Derivative \cite{bib2})  
Suppose \(f: [a, \infty) \rightarrow \mathbb{R}\) and \(\alpha \in (0,1]\). The left conformable fractional derivative of \(f\) starting from the point \(a\) is defined as  
\begin{equation}  
T_{\alpha}^{a}(f)(t) = \lim_{\epsilon \to 0} \frac{f(t + \epsilon (t - a)^{1 - \alpha}) - f(t)}{\epsilon}, \quad t > a.  
\label{eq:P3}  
\end{equation}  

This derivative is commonly written as \(f^{(\alpha, a)}(t)\). In the case where \(a = 0\), we write \(T_{\alpha}^{a} f\) as \(T_{\alpha} f\). If \(f^{(\alpha, a)}(t)\) exists on an interval \((a, b)\), then the derivative at the starting point is defined via the limit: \(f^{(\alpha, a)}(a) = \lim_{t \to a^{+}} f^{(\alpha, a)}(t)\).
\end{definition}

\begin{definition}(Right Conformable Fractional Derivative \cite{bib2})  
Let $\alpha \in (0,1]$ and suppose $f$ is defined on an interval ending at $b$. The \textit{right conformable fractional derivative} of order $\alpha$ at a point $t < b$ is defined by  
\begin{equation}
\left({ }_{\alpha}^{b} \mathrm{T} f\right)(t) = -\lim_{\epsilon \to 0} \frac{f\left(t + \epsilon(b - t)^{1 - \alpha}\right) - f(t)}{\epsilon}, \quad t < b.
\label{eq:P4}
\end{equation}  
If this limit exists for every $t \in (a, b)$ where $b > a$, then the derivative at $t = b$ is defined as  
\[
\left({ }_{\alpha}^{b} \mathrm{T} f\right)(b) = \lim_{t \to b^{-}} \left({ }_{\alpha}^{b} \mathrm{T} f\right)(t).
\]
\end{definition}

\begin{theorem}(\cite{bib1})\label{thm:th1}  
Assume $\alpha \in (0,1]$ and that the functions $f_1$ and $f_2$ are $\alpha$-differentiable at a point $t > a$. Then the following rules hold:
\begin{enumerate}
    \item \textbf{Linearity:} For any constants $c, d \in \mathbb{R}$,
    \[
    T_{\alpha}^{a}(c f_1 + d f_2) = c T_{\alpha}^{a}(f_1) + d T_{\alpha}^{a}(f_2).
    \]
    \item \textbf{Constant function:} For any constant $\beta$,
    \[
    T_{\alpha}^{a}(f) = 0, \quad \text{where } f(t) = \beta.
    \]
    \item \textbf{Product rule:}
    \[
    T_{\alpha}^{a}(f_1 f_2) = f_1 T_{\alpha}^{a}(f_2) + f_2 T_{\alpha}^{a}(f_1).
    \]
    \item \textbf{Quotient rule:}
    \[
    T_{\alpha}^{a}\left(\frac{f_1}{f_2}\right) = \frac{f_2 T_{\alpha}^{a}(f_1) - f_1 T_{\alpha}^{a}(f_2)}{f_2^2}.
    \]
    \item \textbf{Relation to classical derivative:} If $f_1$ is differentiable in the classical sense, then
    \[
    T_{\alpha}^{a}(f_1)(t) = (t - a)^{1 - \alpha} \frac{d f_1}{d t}(t).
    \]
    \item \textbf{Example:} For $f(t) = \frac{(t - a)^{\alpha}}{\alpha}$,
    \[
    T_{\alpha}^{a}(f)(t) = 1.
    \]
\end{enumerate}
\end{theorem}

\begin{definition}[Chain Rule \cite{bib2}]  
Let $f$ and $g$ be two functions defined on the interval $(a, \infty)$, both of which are (left) $\alpha$-differentiable for $0 < \alpha \leq 1$. Define $h(t) = f(g(t))$. Then $h(t)$ is also (left) $\alpha$-differentiable. For $t \neq a$ and $g(t) \neq 0$, we have  
\begin{equation}
\left(T_{\alpha}^{a} h\right)(t) = \left(T_{\alpha}^{a} f\right)(g(t)) \left(T_{\alpha}^{a} g\right)(t)(g(t) - a)^{\alpha - 1},  
\label{eq:P5}  
\end{equation}  
and at $t = a$, the derivative is defined as  
\begin{equation}
\left(T_{\alpha}^{a} h\right)(a) = \lim _{t \rightarrow a^{+}} \left(T_{\alpha}^{a} f\right)(g(t)) \left(T_{\alpha}^{a} g\right)(t) (g(t) - a)^{\alpha - 1}.  
\label{eq:P6}  
\end{equation}  
\end{definition}

\begin{definition}[$\alpha$-Conformable Integral \cite{bib1}]
The $\alpha$-conformable integral of a function $f$ over $[a, t]$ is defined by
\[
I_\alpha^a (f)(t)=\int_a^t f(x) \, d(x,\alpha) = I_a^1 (t^{\alpha-1} f(x)) = \int_a^t \frac{f(x)}{(x-a)^{1-\alpha}} \, dx,
\]
where the integral is interpreted in the standard Riemann improper sense, and $\alpha \in (0, 1)$.
\end{definition}

\begin{theorem}[Schauder's Fixed Point Theorem]
Let $K$ be a closed, convex, and nonempty subset of a Banach space $E$. If $F : K \to K$ is a continuous mapping and $F(K)$ is relatively compact in $E$, then $F$ admits at least one fixed point in $K$.
\end{theorem}

\begin{theorem}[\cite{bib4}]\label{prop:pro4}
Suppose $U(t)$ is a regular and $\alpha$-differentiable matrix-valued function. Then the $\alpha$-derivative of its inverse is given by
\[
(U^{-1})^{(\alpha)}(t) = -U^{-1}(t) U^{(\alpha)}(t) U^{-1}(t).
\]
\end{theorem}

\begin{theorem}[Differentiation Under the Integral Sign {\cite{bib5}}]\label{thm:th5}
Suppose \( h(t, s) \) is a sufficiently smooth function. Then, the conformable fractional derivative of an integral with variable limits is given by:
\begin{align*}
   T_{\alpha}\left(\int_{a(t)}^{b(t)} h(t, s) \, d(s,\alpha)\right)(t)
   &=
   \int_{a(t)}^{b(t)} T_{\alpha}\left(h(t, s)\right)(t) \, d(s,\alpha) \\
   &\quad + h(t, b(t)) \cdot T_{\alpha}(b)(t) - h(t, a(t)) \cdot T_{\alpha}(a)(t).
\end{align*}
\end{theorem}

\section{Existence of Exact Null Controllability}
\label{sec:Exact Null Controllability}


\subsection{Evolution Operator}
\label{subsec:Existence of the Evolution Operator}
Let \( Z \) denote a Hilbert space with norm \( \|\cdot\| \). We consider a collection of linear operators \( \{A(t) : t_0 \leq \frac{t^\alpha}{\alpha} \leq t_f\} \) that satisfy the following properties:

\begin{itemize}
    \item[(a)] The domain \( D_A \), which is common to all \( A(t) \), is dense in \( Z \), independent of \( t \), and each \( A(t) \) is a closed operator.

    \item[(b)] For each \( \frac{t^\alpha}{\alpha} \), the resolvent \( R_\alpha(\lambda_\alpha, A(t)) \) exists whenever \( \text{Re}(\lambda_\alpha) \leq 0 \), and there exists a constant \( K_1 > 0 \) such that
    \[
    \|R_\alpha(\lambda_\alpha, A(t))\| \leq \frac{K_1}{|\lambda_\alpha| + 1}.
    \]

    \item[(c)] There exist constants \( 0 < \delta \leq 1 \) and \( K_2 > 0 \) such that for all \( \frac{t^\alpha}{\alpha}, \frac{s^\alpha}{\alpha}, \frac{\tau^\alpha}{\alpha} \in [t_0, t_f] \), the inequality
    \[
    \|(A(t) - A(s))A^{-1}(\tau)\| \leq K_2 |t^\alpha - s^\alpha|^\delta
    \]
    holds.

    \item[(d)] For any \( \frac{t^\alpha}{\alpha} \in [t_0, t_f] \) and \( \lambda_\alpha \in \rho(A(t)) \), where \( \rho(A(t)) \) denotes the resolvent set of \( A(t) \), the operator \( R_\alpha(\lambda_\alpha, A(t)) \) is compact.
\end{itemize}

Under the given assumptions, the operator family \( \{A(t)\} \) generates a fractional evolution operator \( \Psi_\alpha(t, \tau) \), as introduced in~\cite{jha2024existence}, and defined by
\begin{equation}\label{eq:1}
    \Psi_\alpha(t,s) = e^{-\left(\frac{t^\alpha}{\alpha} - \frac{s^\alpha}{\alpha}\right)A(t)} + \int_{\frac{s^\alpha}{\alpha}}^{\frac{t^\alpha}{\alpha}} e^{-\left(\frac{t^\alpha}{\alpha} - \frac{\tau^\alpha}{\alpha}\right)A(\tau)}  R(\tau,s) \, d(\tau,\alpha),
\end{equation}
where \( \exp\left(-\frac{\tau^\alpha}{\alpha} A(t)\right) \) denotes the analytic semigroup generated by the operator \( -A(t) \).

\begin{proposition}\cite{jha2024existence}
The fractional evolution system \( \{ \Psi_\alpha(t,s): t_0 \leq \frac{s^\alpha}{\alpha} \leq \frac{t^\alpha}{\alpha} \leq t_f \} \) satisfies the following properties:
\begin{itemize}
    \item[($A_1$)] \label{item:A} For each \( t_0 \leq \frac{s^\alpha}{\alpha} \leq \frac{t^\alpha}{\alpha} \leq t_f \), the operator \( \Psi_\alpha(t,s) \in BL(Z) \), the space of bounded linear operators on \( Z \), and for every \( x \in Z \), the mapping \( (t, s) \mapsto \Psi_\alpha(t,s)x \) is continuous.
    
    \item[($A_2$)] \label{item:B} The composition law holds:
    \[
        \Psi_\alpha(t,s)\Psi_\alpha(s,\tau) = \Psi_\alpha(t,\tau),
    \]
    for all \( t_0 \leq \frac{\tau^\alpha}{\alpha} \leq \frac{s^\alpha}{\alpha} \leq \frac{t^\alpha}{\alpha} \leq t_f \).
    
    \begin{note}
        This property typically fails for conformable fractional derivatives with initial conditions of the form \( x(\zeta) = x_0 \), whether left- or right-sided.
    \end{note}
    
    \item[($A_3$)] \label{item:C} The identity condition holds: \( \Psi_\alpha(t,t) = I \), where \( I \) denotes the identity operator on \( Z \).
\end{itemize}
\end{proposition}

\begin{lemma}
Let $\{ \Psi_\alpha(t,s) : t_0 \leq \frac{s^\alpha}{\alpha} \leq \frac{t^\alpha}{\alpha} \leq t_f \}$ be a family of linear evolution operators. Then, $\Psi_\alpha(t,s)$ is a compact operator for all $t^\alpha - s^\alpha > 0$.
\end{lemma}

\begin{proof}
Compactness of the evolution operator $\Psi_\alpha(t,s)$ follows from condition (d), as established in Proposition 2.1 of \cite{fitzgibbon1978semilinear}. Consequently, there exists a constant $M \geq 1$ such that
\begin{equation}\label{eq:3}
   \| \Psi_\alpha(t,s) \| \leq M, \quad \text{for all} \quad t_0 \leq \frac{s^\alpha}{\alpha} \leq \frac{t^\alpha}{\alpha} \leq t_f.
\end{equation}
\end{proof}

\begin{theorem}\label{Theorem:Estimation}
Let \( 0 < \alpha \leq 1 \). Suppose \( t, \tau \) satisfy \( t^\alpha - \tau^\alpha > h \). Then there exists a constant \( C > 0 \) such that
\[
\|\Psi_\alpha(t + h, \tau) - \Psi_\alpha(t, \tau)\| \leq \frac{C}{t^\alpha - \tau^\alpha} \left[\frac{(t + h t^{1 - \alpha})^\alpha - t^\alpha}{\alpha} \right].
\]
\end{theorem}

\begin{proof}
The evolution operator \( \Psi_\alpha(t, \tau) \) is known to be strongly \(\alpha\)-conformably differentiable with respect to \( t \), and it satisfies
\[
T_\alpha\big( \Psi_\alpha(t, s) \big)(t) = -A(t) \Psi_\alpha(t, s),
\]
as established by Jha~\cite{jha2024existence} and Pazy~\cite[p.~158]{pazy2012semigroups}.

Consequently, we estimate:
\begin{align}
    \left\| \Psi_\alpha(t + h t^{1 - \alpha}, \tau) - \Psi_\alpha(t, \tau) \right\|
    &= \left\| \int_t^{t + h t^{1 - \alpha}} T_\alpha \big( \Psi_\alpha(s, \tau) \big)(s) \, d(s, \alpha) \right\| \\
    &\leq \int_t^{t + h t^{1 - \alpha}} \left\| T_\alpha \big( \Psi_\alpha(s, \tau) \big)(s) \right\| \, d(s, \alpha) \\
    &\leq \int_t^{t + h t^{1 - \alpha}} \frac{C}{s^\alpha - \tau^\alpha} \, d(s, \alpha).
\end{align}

Since \( s \in [t, t + h t^{1 - \alpha}] \) and \( t^\alpha - \tau^\alpha > h \), it follows that \( s^\alpha - \tau^\alpha \geq t^\alpha - \tau^\alpha \). Therefore:
\begin{align*}
    \int_t^{t + h t^{1 - \alpha}} \frac{C}{s^\alpha - \tau^\alpha} \, d(s, \alpha)
    &\leq \frac{C}{t^\alpha - \tau^\alpha} \int_t^{t + h t^{1 - \alpha}} d(s, \alpha) \\
    &= \frac{C}{t^\alpha - \tau^\alpha} \left[ \frac{(t + h t^{1 - \alpha})^\alpha - t^\alpha}{\alpha} \right].
\end{align*}

Thus, the desired estimate follows:
\[
\|\Psi_\alpha(t + h, \tau) - \Psi_\alpha(t, \tau)\| \leq \frac{C}{t^\alpha - \tau^\alpha} \left[ \frac{(t + h t^{1 - \alpha})^\alpha - t^\alpha}{\alpha} \right].
\]
\end{proof}

Moreover, based on Theorem~\ref{Theorem:Estimation}, we establish the following result, which will serve as a key component in the analysis presented in the next section.

\begin{lemma}\label{lemma:1}
    The family of operators $\{\Psi_\alpha(t,s):\frac{t^\alpha}{\alpha} > \frac{s^\alpha}{\alpha} \geq t_0\}$ is continuous with respect to \( t \), uniformly in \( s \), under the uniform operator topology.
\end{lemma}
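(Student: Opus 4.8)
The plan is to read off the asserted uniform continuity directly from the Hölder-type estimate (E), that is, from inequality~(\ref{eq:2}), since this is exactly the quantitative control of $\Psi_\alpha(t,s)$ in its first argument in the operator norm. Fix $\epsilon>0$. It suffices to consider two times $t$ and $t+h$ with $0<h<1$ and to bound $\|\Psi_\alpha(t+h,s)-\Psi_\alpha(t,s)\|$, showing that it tends to $0$ as $h\to 0^{+}$ at a rate independent of $s$; the case of a negative increment follows by relabelling the two endpoints and applying the same estimate with base point $t-h$.

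First I would work on the portion of the admissible region where the gap is bounded below, say $\frac{t^\alpha}{\alpha}-\frac{s^\alpha}{\alpha}\ge c$ for a fixed $c>0$. Once $h$ is small enough that $t^\alpha-s^\alpha>h$ (which holds as soon as $h<\alpha c$), property (E) applied with $\tau=s$ gives
\[
\|\Psi_\alpha(t+h,s)-\Psi_\alpha(t,s)\|\le \frac{M h^{\gamma}}{|t^\alpha-s^\alpha|^{\gamma}}\le \frac{M h^{\gamma}}{(\alpha c)^{\gamma}},
\]
where I used $|t^\alpha-s^\alpha|=\alpha\big(\tfrac{t^\alpha}{\alpha}-\tfrac{s^\alpha}{\alpha}\big)\ge \alpha c$. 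The last bound is free of $s$ and vanishes as $h\to0^{+}$, so choosing $\delta$ small enough (and no larger than $\alpha c$) forces the left-hand side below $\epsilon$ for every admissible $s$. Thus $t\mapsto\Psi_\alpha(t,s)$ carries a modulus of continuity in the operator norm that does not depend on $s$ on this region, which is the content of the lemma there.

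The main obstacle is the behaviour near the diagonal $t=s$, where the denominator in (E) degenerates and the estimate becomes vacuous; this is precisely where uniformity over all admissible $s$ is delicate. I would handle it in one of two ways. Since the lemma concerns the open region $\frac{t^\alpha}{\alpha}>\frac{s^\alpha}{\alpha}$ and in every subsequent application the pair $(t,s)$ remains in a set with the gap bounded below, the estimate above already supplies the required uniform modulus. To obtain genuine norm continuity up to the diagonal, I would instead combine the compactness of $\Psi_\alpha(t,s)$ for $t^\alpha-s^\alpha>\zeta$ (property (D)) with the strong continuity of $(t,s)\mapsto\Psi_\alpha(t,s)x$ (property (A)): on the precompact image of the closed unit ball, pointwise (strong) convergence upgrades to uniform convergence, which is exactly the passage from the strong to the uniform operator topology. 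I expect the only real work to be the clean splitting into the ``gap bounded below'' and ``near-diagonal'' regimes; on each piece the argument is routine, the former resting on (\ref{eq:2}) and the latter on the Pazy-type compactness argument.
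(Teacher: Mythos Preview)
The paper does not actually prove Lemma~\ref{lemma:1}; it is stated without argument and the reader is referred to the classical theory of linear evolution systems in \cite{friedman1969partial,pazy2012semigroups}. Your proposal supplies precisely the standard derivation underlying those references: the H\"older estimate (E), i.e.\ inequality~(\ref{eq:2}), is exactly the quantitative input that converts into a uniform-in-$s$ modulus of norm continuity on any region where the gap $t^\alpha-s^\alpha$ is bounded below, and your computation
\[
\|\Psi_\alpha(t+h,s)-\Psi_\alpha(t,s)\|\le \frac{Mh^{\gamma}}{|t^\alpha-s^\alpha|^{\gamma}}\le \frac{Mh^{\gamma}}{(\alpha c)^{\gamma}}
\]
is the correct way to read this off. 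You also correctly flag that (E) degenerates at the diagonal and that this is not an obstruction, both because the lemma is stated on the open region $\tfrac{t^\alpha}{\alpha}>\tfrac{s^\alpha}{\alpha}$ and because every use of the lemma in the paper (notably the treatment of $I_2$ in Step~2 of Lemma~\ref{Lemma:3}) integrates only over $s\in[\zeta,t'-\epsilon]$, where the gap is bounded below by $\epsilon$. Your alternative compactness-plus-strong-continuity route (factoring through an intermediate time and using property~(D) to make the image of the unit ball precompact, then upgrading the strong convergence from (A) to uniform convergence on that set) is also a legitimate Pazy-style argument, though it is not needed for the applications here. In short: your proof is sound and is essentially what the cited references do; the paper itself simply omits the argument.
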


To explore the exact null controllability of equation~(\ref{eq:P}), we consider the associated linear system:

\begin{equation}
\begin{cases}
T_{\alpha}z(t) + A(t)z(t) = Bu(t) + h(t), & t \in [t_1,t_2],\quad \alpha \in (0,1],\\
z(\zeta) = z_0, \quad \zeta \in [t_0,t_f] \cap [t_1,t_2],
\end{cases}
\label{eq:R}
\tag{R}
\end{equation}

This system is linked to equation~(\ref{eq:P}) and is considered under the condition that there exists a constant \( 0 \leq \xi < \min\{\alpha, \delta\} \) such that \( h \in L^{\frac{1}{\xi}}([\zeta, t_2]; Z) \).\\

We define the operators $L^{t_2}_{\zeta}: L^2([\zeta,t_2];U) \rightarrow Z$ and $N^{t_2}_{\zeta}: Z \times L^{\frac{1}{\xi}}([\zeta,t_2]; Z) \rightarrow Z$ as follows:

\[
\begin{aligned}
L^{t_2}_{\zeta}(u) &= \int_{\zeta}^{t_2} \Psi_\alpha(t_2, s)Bu(s) \, d(s,\alpha), \quad \text{for all } u \in L^2([\zeta,t_2];U), \\
N^{t_2}_{\zeta}(z_0, h) &= \Psi_\alpha(t_2, \zeta)z_0 + \int_{\zeta}^{t_2} \Psi_\alpha(t_2, s)h(s) \, d(s,\alpha), \\
&\text{for all } (z_0, h) \in Z \times L^{\frac{1}{\xi}}([\zeta,t_2]; Z).
\end{aligned}
\]

The following concepts are fundamental for our analysis:

\begin{definition}\label{Definition:1}
    The system described by equation~\eqref{eq:R} is said to be \emph{exactly null-controllable} on the interval $[\zeta, t_2]$ if the following inclusion holds:
    \begin{equation}\label{eq:4}
        \text{Im } L^{t_2}_{\zeta} \supseteq \text{Im } N^{t_2}_{\zeta}.
    \end{equation}
\end{definition}

\begin{remark}\label{remark:1}
    As established in~\cite{curtain2012introduction}, the system~\eqref{eq:R} is exactly null-controllable if and only if there exists a constant $\gamma > 0$ such that:
    \begin{equation}\label{eq:5}
        \| (L^{t_2}_{\zeta})^* z \| \geq \gamma \| (N^{t_2}_{\zeta})^* z \|, \quad \text{for all } z \in Z.
    \end{equation}
\end{remark}

We now state a key lemma essential to our discussion:

\begin{lemma}\label{lemma:2}\cite{jha2024existence}
    Assume that the system~\eqref{eq:R} is exactly null-controllable. Then the linear operator
    \[
    H_\alpha := (L_{\zeta})^{-1}(N_{\zeta}^{t_2}): Z \times L^{\frac{1}{\xi}}([\zeta, t_2]; Z) \to L^2([\zeta, t_2]; U)
    \]
    is bounded. Furthermore, the control function defined by
    \begin{align}\label{eq:6}
        u(t) &:= - (L_{\zeta})^{-1}(N_{\zeta}^{t_2}(z, h)(t)) \notag \\
        &= -H_\alpha(z_0, h)(t) \\
        &= - (L_{\zeta})^{-1} \left( \Psi_\alpha(t_2, \zeta) z_0 + \int_{\zeta}^{t_2} \Psi_\alpha(t_2, s) h(s) \, d(s, \alpha) \right)(t), \notag
    \end{align}
    drives the linear system~\eqref{eq:R} from the initial condition $z_0$ to the origin. Here, $L_{\zeta}$ is the restriction of $L_{\zeta}^{t_2}$ to the orthogonal complement of its kernel, denoted by $[\text{ker } L_{\zeta}^{t_2}]^\perp$.
\end{lemma}

\subsection{Existence of mild solutions }

\label{subsec:Existence of mild solutions }

This subsection investigates the precise null controllability issue related to equation (\ref{eq:P}). We first introduce the concept of a mild solution and then define its exact null controllability.

\begin{definition}\label{Definition:2}
  A function \( x \in \mathbb{C}([\zeta,t_2]; Z) \) is considered a mild solution to problem (\ref{eq:P}) if it satisfies the following equation:
\begin{equation}\label{eq:7}
x(t) = \Psi_\alpha(t,\zeta)[x_{0}-g(x)]  + \int_{\zeta}^{t} \Psi_\alpha(t,s)[Bu(s)+F(s,x(b(s)))] \frac{ds}{s^{1-\alpha}},\quad t \in [\zeta,t_2],
\end{equation}
where \( u(\cdot) \in L^2([\zeta,t_2]; U) \) is the control function.
\end{definition}

\begin{definition}\label{Definition:3}
    A system (\ref{eq:P}) is said to be exactly null controllable if there exists a control \( u \in L^2([\zeta,t_2]; U) \) such that, under this control, \( x(t_2, u) = 0 \).
\end{definition}

We now impose the following conditions on the semi-linear system (\ref{eq:P}):

\begin{itemize}
    \item[$(H_1)$] Let $t \in [\zeta, t_2]$. The function $F(t, \cdot): Z \rightarrow Z$ remains continuous. For any given $x \in Z$, the function $F(\cdot, x): [\zeta, t_2] \rightarrow Z$ is strongly measurable. Furthermore, there exist constants $0 \leq \xi < \min \{\alpha, \delta\}$ and $\gamma > 0$. For any $r > 0$, there exists a function $h_r(t) \in L^{\frac{1}{\xi}}([\zeta, t_2]; \mathbb{R}^+)$ such that
\[
\sup_{x \in W_r} \| F(t, x(b(t))) \| \leq h_r(t),
\]
holds for almost every $t \in [\zeta, t_2]$. Moreover, it satisfies
\[
\liminf_{r \to +\infty} \frac{\| h_r(t) \|_{L^\frac{1}{\xi}}}{r} = \gamma < \infty,
\]
where $W_r = \{ x \in \mathbb{C}([\zeta, t_2]; Z) : \| x(\cdot) \| \leq r \}$.

     \item[($H_2$)]  The function \( g: \mathbb{C}([\zeta, t_2], Z) \to Z \) is continuous, and there exists a non-decreasing continuous function \( \varphi_r: \mathbb{R}^+ \to \mathbb{R}^+ \) and a constant \( L > 0 \) such that, for some \( r > 0 \) and all \( x \in W_r \),
    \[
    \|g(x)\| \leq \varphi_r, \quad \text{and} \quad \liminf_{r \to +\infty} \frac{\varphi_r}{r} = L < +\infty.
    \]

\item[$(H_3)$] The delay function $b(\cdot) : [\zeta,t_2]  \to [\zeta,t_2]$ is continuous.
    \item[$(H_4)$] The linear system (\ref{eq:R}) is exactly null controllable on $[\zeta,t_2]$ in $Z$.
\end{itemize}
Next, for convenience, let us introduce the following notation:\\
$$\psi_r=\sup \{h_r(\tau):\lVert \tau \rVert \leq r\},\quad \textit{then} \quad \liminf_{r \rightarrow +\infty} \frac{\| \psi_r \|_{L^\frac{1}{\xi}}}{r} = \gamma ,$$ 

\[
N =  \left\{ \bigg(  (t_2)^{\frac{\alpha-\xi}{1-\xi}}-(\zeta)^{\frac{\alpha-\xi}{1-\xi}} \bigg)^{1-\xi} \bigg(\frac{1-\xi}{\alpha-\xi}\bigg)^{1-\xi}:0\leq \xi <\min{ \{\alpha, \delta\}}\right\}.
\]

\begin{theorem}\label{Theorem:1}
Suppose the assumptions $(H_1)-(H_4)$ hold. Then the nonlocal fractional differential system given by (\ref{eq:P}) is exactly null controllable on the interval $[\zeta, t_2]$, provided that
\begin{equation}\label{eq:8}
   M^2 L + \lVert B \rVert \lVert H_\alpha \rVert N M (ML + \gamma) + N M \gamma < 1,
\end{equation}
and there exists a positive constant $r$ such that  
\begin{equation}
  \lVert H \rVert\left( \lVert x_0\rVert + M \varphi_r + \psi_r \sqrt{t_2 - \zeta} \right) < r.  
\end{equation}
\end{theorem}

\begin{proof}
We now proceed to decompose the proof of the theorem into two logical segments:\\
\textbf{Part 1:}
   Let \( x \in \mathbb{C}([\zeta,t_2]; Z) \). We define the control \( u \) as
\begin{align}\label{eq:11}
u(t) &= -H_\alpha(x_0 - g(x), F)(t) \notag \\
     &= -(L_\xi)^{-1} \Bigg[\Psi_\alpha(t_2, \zeta)[x_0 - g(x)] 
      + \int_{\zeta}^{t_2} \Psi_\alpha(t_2, s) F(s, x(b(s))) \frac{ds}{s^{1-\alpha}} \Bigg](t).
\end{align}
Clearly, the definition of \( u(t) \) is valid, as we have \( x_0 - g(x) \in Z \) and \( F(s, x(b(s))) \in L^{\frac{1}{\xi}}([\zeta,t_2]; Z) \). This control successfully drives the initial condition \( x_0 \) to the target zero state. Specifically, if \( x(t,u) \) denotes the mild solution to equation (\ref{eq:P}) with control \( u \) given by (\ref{eq:11}), then
\begin{align*}
x(t_2, u) &= \Psi_\alpha(t_2, \zeta)\left[x_0 - g(x)\right] + \int_{\zeta}^{t_2}\Psi_\alpha(t_2, s)\left[Bu(s) + F(s, x(b(s)))\right]\frac{ds}{s^{1-\alpha}} \\
&= \Psi_\alpha(t_2, \zeta)\left[x_0 - g(x)\right] - \int_{\zeta}^{t_2}\Psi_\alpha(t_2, s)B(L_\xi)^{-1} \\
&\quad \times \left[\Psi_\alpha(t_2, \zeta)\left[x_0 - g(x)\right] + \int_{\zeta}^{t_2}\Psi_\alpha(t_2, s)F(s, x(b(s)))\right](s)\frac{ds}{s^{1-\alpha}} \\
&\quad + \int_{\zeta}^{t_2} \Psi_\alpha(t_2, s)F(s, x(b(s)))\frac{ds}{s^{1-\alpha}}\\
& = 0.
\end{align*}

\end{proof}

Next, we introduce the operator \( Q \) defined on \( \mathbb{C}([\zeta,t_2]; Z) \) as follows:
\begin{align*}
    Qx(t) &= \Psi_\alpha(t,\zeta)[x_0 - g(x)]\\
    &\quad + \int_\zeta^t \Psi_\alpha(t,s)[-BH_\alpha(x_0 - g(x), F(s)) + F(s, x(b(s)))]\frac{ds}{s^{1-\alpha}}, \quad t \in (\zeta, t_2], 
\end{align*}

where \( u(t) \) is specified by equation (\ref{eq:11}). It suffices to demonstrate that \( Q \) possesses a fixed point in \( \mathbb{C}([\zeta,t_2]; Z) \), as such a fixed point corresponds to a mild solution of (\ref{eq:P}). To accomplish this, we begin by establishing two supporting lemmas.

For any fixed $n \in \mathbb{N}, \textit{and} \quad t \in [\zeta,t_2]$, we consider the following nonlocal problem:
\begin{equation}\label{eq:9}
    \begin{cases}
T_\alpha x(t) + Ax(t)= - BH_\alpha \left( x_0 - \Psi_\alpha\left( \frac{(n+1)\zeta}{n}, \zeta \right) g(x), F)(t)\right) + F(t, x(b(t))) ,  \\
x(\zeta) + \Psi_\alpha\left( \frac{(n+1)\zeta}{n},\zeta \right)g(x) = x_0.
\end{cases}
\end{equation}

\begin{lemma}\label{Lemma:3}
Provided that the conditions outlined in Theorem \ref{Theorem:1} hold true, the nonlocal problem~\eqref{eq:9} admits a mild solution on the interval $[\zeta, t_2]$ for every $n \in \mathbb{N}$.
\end{lemma}

\begin{proof}
\textbf{Step 1:}
 The control $u(\cdot)=-H_\alpha(x_0,F)(s)$ is bounded on $W_r.$ Indeed ,
\begin{align*}
    \lVert u \rVert & = \bigg( \int_{\zeta}^{t_2} \lVert H_\alpha \left( x_0 - \Psi_\alpha\left( \frac{(n+1)\zeta}{n}, \zeta \right) g(x), F \right)(\tau) \rVert^2 \, d\tau \bigg)^\frac{1}{2} \\
    &\leq \lVert H_\alpha \rVert\bigg( \lVert  x_0 - \Psi_\alpha\left( \frac{(n+1)\zeta}{n}, \zeta \right) g(x)\rVert +\bigg(\int_{\zeta}^{t_2} \bigg(   \lVert F(\tau,x(b(\tau))\rVert\, \bigg)^2 \, d\tau \bigg)^\frac{1}{2}\bigg)\\
    &\leq \lVert H_\alpha \rVert\bigg( \lVert x_0\rVert+M \varphi_r +\bigg(\int_{\zeta}^{t_2} \bigg(   \lVert h_r(\tau)\rVert\, \bigg)^2 \,  d\tau \bigg)^\frac{1}{2}\bigg)\\
    & \leq  \lVert H_\alpha \rVert\bigg( \lVert x_0\rVert+M \varphi_r +\bigg(\int_{\zeta}^{t_2} \bigg( \psi_r \bigg)^2 \,  d\tau \bigg)^\frac{1}{2}\bigg)\\
    & =  \lVert H_\alpha \rVert\bigg( \lVert x_0\rVert+M \varphi_r +\psi_r \sqrt{t_2-\zeta}\bigg).
\end{align*}

\textbf{Step 2:}
We aim to demonstrate that the operator \( Q_n \) possesses a fixed point in the Banach space \( \mathbb{C}([\zeta, t_2]; Z) \). To this end, we first verify the existence of a radius \( r > 0 \) such that \( Q_n \) maps the closed ball \( W_r \subset \mathbb{C}([\zeta, t_2]; Z) \) into itself. 

Assume, for contradiction, that no such \( r \) exists. Then, for each \( r > 0 \) and any \( x(\cdot) \in W_r \), there exists a time \( t(r) \in [\zeta, t_2] \) satisfying
\[
\| (Q_n x)(t(r)) \| > r.
\]
However, by invoking estimate (\ref{eq:3}) along with the assumptions \( (H_1)-(H_3) \), we derive a contradiction to this assertion.

\begin{align*}
   r &< \lVert (Q_n x)(t) \rVert \\
&\leq \biggl\lVert \Psi_\alpha(t,\zeta) \left[ x_0 - \Psi_\alpha\left( \frac{(n+1)\zeta}{n}, \zeta \right) g(x) \right]\biggr \rVert \\
&\quad + \left\lVert \int_\zeta^t \Psi_\alpha(t,s) B H_\alpha \left( x_0 - \Psi_\alpha\left( \frac{(n+1)\zeta}{n}, \zeta \right) g(x), F \right)(s) \frac{ds}{s^{1-\alpha}} \right\rVert \\
&\quad + \left\lVert \int_\zeta^t \Psi_\alpha(t,s) F(s, x(b(s))) \frac{ds}{s^{1-\alpha}} \right\rVert \\
\end{align*}
\begin{align*}
&\leq M\left( \lVert x_0 \rVert + M \lVert g(x) \rVert \right)\\
&\quad+ M \lVert B \rVert  \times \int_\zeta^{t_2} \left\lVert H_\alpha \left( x_0 - \Psi_\alpha\left( \frac{(n+1)\zeta}{n}, \zeta \right) g(x), F \right)(s) \right\rVert \frac{ds}{s^{1-\alpha}} \\
&\quad + M \int_\zeta^t \lVert F(s, x(b(s))) \rVert \frac{ds}{s^{1-\alpha}} \\
&\leq M\left( \lVert x_0 \rVert + M \varphi_r \right) + M \lVert B \rVert \left[{\int_{\zeta}^{t_2}  s^{\frac{\alpha -1}{1- \xi}}   }ds\right]^{1- \xi}\\
&\quad \quad \times \left( \int_\zeta^{t_2} \left\lVert H_\alpha \left( x_0 - \Psi_\alpha\left( \frac{(n+1)\zeta}{n}, \zeta \right) g(x), F \right)(s) \right\rVert^{\frac{1}{\xi}} ds \right)^{\xi}\\
&\quad  + M \left[{\int_{\zeta}^{t_2}  s^{\frac{\alpha -1}{1-\xi}}   }ds\right]^{1-\xi} \lVert h_r \rVert_{L^\frac{1}{\xi}} \\
&\leq M \left( \lVert x_0 \rVert + M \varphi_r \right)\\
&\quad+ M\lVert B \rVert \bigg(  (t_2)^{\frac{\alpha-\xi}{1-\xi}}-(\zeta)^{\frac{\alpha-\xi}{1-\xi}} \bigg)^{1-\xi} \bigg(\frac{1-\xi}{\alpha-\xi}\bigg)^{1-\xi} \lVert\\
&\quad \quad \times H_\alpha \rVert \left( \lVert x_0 + \Psi_\alpha\left( \frac{(n+1)\zeta}{n}, \zeta \right) g(x) \rVert + \lVert F \rVert_{L^\frac{1}{\xi}}\right)\\
&\quad+ M\bigg(  (t_2)^{\frac{\alpha-\xi}{1-\xi}}-(\zeta)^{\frac{\alpha-\xi}{1-\xi}} \bigg)^{1-\xi} \bigg(\frac{1-\xi}{\alpha-\xi}\bigg)^{1-\xi} \lVert h_r \rVert_{L^\frac{1}{\xi}}  \\
&\leq M \Bigg[ \lVert x_0 \rVert + M \varphi_r + \lVert B \rVert \lVert H_\alpha \rVert N \Bigg( \lVert x_0 \rVert + M \varphi_r + \lVert h_r \rVert_{L^\frac{1}{\xi}} \Bigg) + N \lVert h_r \rVert_{L^\frac{1}{\xi}} \Bigg]
\end{align*}
that is,
\begin{equation}\label{eq:eq_r}
    r \leq M \Bigg( \lVert x_0 \rVert + M \varphi_r + \lVert B \rVert \lVert H_\alpha \rVert N \Bigg( \lVert x_0 \rVert + M \varphi_r + \lVert h_r \rVert_{L^\frac{1}{\xi}} \Bigg) + N \lVert h_r \rVert_{L^\frac{1}{\xi}} \Bigg)
\end{equation}
Dividing both sides of equation~\eqref{eq:eq_r} by \( r \) and letting \( r \to +\infty \), we arrive at:

\[
M^2 L + \lVert B \rVert \lVert H_\alpha \rVert N M (ML + \gamma) + N M \gamma \geq 1,
\]

which contradicts equation~\eqref{eq:8}. Hence, there exists a constant \( r > 0 \) such that \( Q_n(W_r) \subseteq W_r \).

Next, we establish that \( Q_n \) is a completely continuous operator through the following steps.

\textbf{Step 3:} \( Q_n \) is continuous. Assume that \( x_k \to x \) in \( W_r \). Then for each \( t \), we have:

\begin{align*}
&\| (Q_n x_k)(t) - (Q_n x)(t) \|\\
& \leq \left\| \Psi_\alpha(t,\zeta) \Psi_\alpha\left( \frac{(n+1)\zeta}{n}, \zeta \right) [g(x_k) - g(x)] \right\| \\
&\quad + \left\| \int_\zeta^t \Psi_\alpha(t,s) \left[ 
B H_\alpha \left( x_0 - \Psi_\alpha\left( \frac{(n+1)\zeta}{n}, \zeta \right) g(x_k), F(s, x_k(b(s))) \right)(s) \right.\right. \\
&\qquad \left.\left. - B H_\alpha \left( x_0 - \Psi_\alpha\left( \frac{(n+1)\zeta}{n}, \zeta \right) g(x), F(s, x(b(s))) \right)(s) 
\right] \frac{ds}{s^{1-\alpha}} \right\| \\
&\quad + \left\| \int_\zeta^t \Psi_\alpha(t,s) [F(s, x_k(b(s))) - F(s, x(b(s)))] \, \frac{ds}{s^{1-\alpha}} \right\| \\
& \to 0 \quad \text{as } k \to +\infty.
\end{align*}

Since the functions \( g \), \( H_\alpha \), and \( F \) are continuous, and by applying the Lebesgue dominated convergence theorem, it follows that
\[
\| (Q_n x_k)(t) - (Q_n x)(t) \| \to 0, \quad \text{as } k \to +\infty,
\]
which confirms the continuity of \( Q_n \).

\textbf{Step 4:} We demonstrate that the collection of functions $\{(Q_n x)(\cdot) : x \in W_r \}$ forms an equicontinuous set on the interval $[\zeta,t_2]$ within the space $\mathbb{C}([\zeta,t_2]; Z)$.

Let $\zeta \leq t' < t'' \leq t_2$, with $x \in W_r$ and $\epsilon > 0$ being arbitrarily small, then

\begin{align*}
&\| Q_n x(t'') - Q_n x(t') \|
\leq  \Bigg\lVert \left[ \Psi_\alpha(t'', \zeta) - \Psi_\alpha(t', \zeta) \right] \left[ x_0 - \Psi_\alpha\left( \frac{(n+1)\zeta}{n}, \zeta \right) g(x) \right] \Bigg\rVert \\
&\quad + \Bigg\lVert \int_{\zeta}^{t'} \left[ \Psi_\alpha(t'', s) - \Psi_\alpha(t', s) \right]\\
&\quad \quad \times \left[ -BH_\alpha \left( x_0 - \Psi_\alpha\left( \frac{(n+1)\zeta}{n}, \zeta \right) g(x), F \right)(s) + F(s, x(b(s))) \right] \frac{ds}{s^{1-\alpha}} \Bigg\rVert \\
& \quad+ \Bigg\lVert \int_{t'}^{t''} \Psi_\alpha(t'', s) \\
&\quad \times\left[  - BH_\alpha \left( x_0 - \Psi_\alpha\left( \frac{(n+1)\zeta}{n}, \zeta \right) g(x), F \right)(s) + F(s, x(b(s))) \right] \frac{ds}{s^{1-\alpha}} \Bigg\rVert \\
\end{align*}
\begin{align*}
&\leq\Bigg\lVert \left[ \Psi_\alpha(t'', \zeta) - \Psi_\alpha(t', \zeta) \right] \left[ x_0 - \Psi_\alpha\left( \frac{(n+1)\zeta}{n}, \zeta \right) g(x) \right] \Bigg\rVert \\
& \quad+ \int_{\zeta}^{t' -\epsilon} \Bigg\lVert \Psi_\alpha(t'', s) - \Psi_\alpha(t', s) \Bigg\rVert\\
 &\quad \quad \times \Bigg\lVert -BH_\alpha\left( x_0 - \Psi_\alpha\left( \frac{(n+1)\zeta}{n}, \zeta \right) g(x), F \right)(s) + F(s, x(b(s))) \Bigg\rVert \frac{ds}{s^{1-\alpha}} \\
& \quad+ \int_{t'-\epsilon}^{t'} \Bigg\lVert \Psi_\alpha(t'', s) - \Psi_\alpha(t', s) \Bigg\rVert\\ 
&\quad \quad \times \Bigg\lVert - BH_\alpha \left( x_0 - \Psi_\alpha\left( \frac{(n+1)\zeta}{n}, \zeta \right) g(x), F \right)(s) + F(s, x(b(s))) \Bigg\rVert \frac{ds}{s^{1-\alpha}} \\
& \quad+ \int_{t'}^{t''} \Bigg\lVert \Psi_\alpha(t'', s)\\
&\quad \times\left[- BH_\alpha \left( x_0 - \Psi_\alpha\left( \frac{(n+1)\zeta}{n}, \zeta \right) g(x), F \right)(s) + F(s, x(b(s))) \right]\Bigg\rVert \frac{ds}{s^{1-\alpha}} \\
= & I_1 + I_2 + I_3 + I_4.
\end{align*}

Clearly, as \( t'' - t' \to 0 \), we observe that \( I_1 \to 0 \) because \( \Psi_\alpha(t, \zeta) \) is strongly continuous for \( t \geq \zeta \), and the set
\[
\left\{ x_0 - \Psi_\alpha\left( \frac{(n+1)\zeta}{n}, \zeta \right) g(x) : x \in W_r \right\}
\]
is compact within \( Z \). Under assumption (\ref{eq:3}), we can derive:

\begin{align*}
  I_3 &\leq 2M  \left(  (t')^{\frac{\alpha-\xi}{1-\xi}}-(t'- \epsilon)^{\frac{\alpha-\xi}{1-\xi}} \right)^{1-\xi} \left(\frac{1-\xi}{\alpha-\xi}\right)^{1-\xi}  \| B \| \\
    & \quad \times \| H_\alpha \| \left( \| x_0 \| + M \varphi_r + \| h_r \|_{L^{\frac{1}{\xi}}} \right) \\
    &+ 2 M \left(  (t_3)^{\frac{\alpha-\xi}{1-\xi}}-(t_3-\epsilon)^{\frac{\alpha-\xi}{1-\xi}} \right)^{1-\xi} \left(\frac{1-\xi}{\alpha-\xi}\right)^{1-\xi} \left( \int_{t_3-\epsilon}^{t_3} \lVert h_r(s)\rVert^{\frac{1}{\xi}} \, ds \right)^\xi.
\end{align*}

Similarly, for \( I_4 \), we have:

\begin{align*}
  I_4 &\leq M \cdot \left( (t'')^{\frac{\alpha-\xi}{1-\xi}} - (t')^{\frac{\alpha-\xi}{1-\xi}} \right)^{1-\xi} \\
    &\quad \times \left(\frac{1-\xi}{\alpha-\xi}\right)^{1-\xi} \| B \| \| H_\alpha \| \left( \| x_0 \| + M \varphi_r + \| h_r \|_{L^{\frac{1}{\xi}}} \right) \\
    &+ M \left(  (t_4)^{\frac{\alpha-\xi}{1-\xi}}-(t_3)^{\frac{\alpha-\xi}{1-\xi}} \right)^{1-\xi} \left(\frac{1-\xi}{\alpha-\xi}\right)^{1-\xi} \left( \int_{t_3}^{t_4} \lVert h_r(s)\rVert^{\frac{1}{\xi}} \, ds \right)^\xi.
\end{align*}

These estimates show that both \( I_3 \) and \( I_4 \) approach zero as \( t'' - t' \to 0 \), independently of \( x \). Furthermore, Lemma (\ref{lemma:1}) and similar evaluations guarantee that \( I_2 \to 0 \) as \( t'' - t' \to 0 \), ensuring that the set \( \{Q_n(x(t)): x \in W_r \} \) is equicontinuous on the interval \( [\zeta, t_2] \).

\textbf{Step 5:} We confirm that for any \( t \in [\zeta, t_2] \), the set \( \{ Q_n(x(t)), x \in W_r \} \) is relatively compact in \( Z \).

At \( t = \zeta \), we have \( Q_n(x(\zeta)) = x_0 - \Psi_\alpha\left( \frac{(n+1)\zeta}{n}, \zeta \right) g(x) \), and since \( g(x) \) is bounded in \( Z \), the result holds at \( t = \zeta \).

Next, define:
\begin{align*}
    x^\epsilon(t) &= \Psi_\alpha(t, \zeta) \left[ x_0 - \Psi_\alpha\left( \frac{(n+1)\zeta}{n}, \zeta \right) g(x) \right]\\
    &+ \int_\zeta^{t-\epsilon} \Psi_\alpha(t, s) \bigg[ - BH_\alpha \left( x_0 - \Psi_\alpha\left( \frac{(n+1)\zeta}{n}, \zeta \right) g(x), F \right)(s)\\
    &\quad+ F(s, x(b(s))) \bigg] \, \frac{ds}{s^{1-\alpha}}, \quad t \in (\zeta, t_2].
\end{align*}

Since \( \Psi_\alpha(t, s) \) is compact for \( t > s \geq \zeta \), the set \( Q_n^\epsilon(x(t)) = \{ x^\epsilon(t): x \in W_r \} \) is also relatively compact in \( Z \) for all \( 0 < \epsilon < t \). Furthermore, for each \( x \in W_r \),
\begin{align*}
   & \| Q_n(x)(t) - x^\epsilon(t) \|\\
   &\leq M \cdot \left( (t)^{\frac{\alpha-\xi}{1-\xi}} - (t-\epsilon)^{\frac{\alpha-\xi}{1-\xi}} \right)^{1-\xi} \left( \frac{1-\xi}{\alpha-\xi} \right)^{1-\xi} \| B \| \| H_\alpha \| \left( \| x_0 \| + M\varphi_r + \| h_r \|_{L^{\frac{1}{\xi}}} \right)\\
   &\quad + M \| h_r \|_{L^{\frac{1}{\xi}}} \left( (t)^{\frac{\alpha-\xi}{1-\xi}} - (t-\epsilon)^{\frac{\alpha-\xi}{1-\xi}} \right)^{1-\xi} \left( \frac{1-\xi}{\alpha-\xi} \right)^{1-\xi}.
\end{align*}

Taking the limit as \( \epsilon \to 0^+ \), we observe that sets arbitrarily close to \( \{ Q_n(x(t)): x \in W_r \} \) remain relatively compact. Thus, we conclude that \( Q_n(x(t)) \) is relatively compact in \( Z \).

By the infinite-dimensional version of the Ascoli-Arzela theorem, \( Q_n \) is a completely continuous operator on \( \mathbb{C}([\zeta, t_2]; Z) \).

Therefore, by Schauder's fixed-point theorem, the operator \( (Q_n x)(t) \) has at least one fixed point \( x_n(t) \), which is a mild solution to equation (\ref{eq:P}).

\end{proof}
Now, define the solution set $D$, $D(t)$, respectively, by
\[
D = \{x_n \in \mathbb{C}([\zeta,t_2]; Z) : x_n = Q_n x_n, \quad  n \geq 1\},
\]
\[
D(t) = \{x_n(t) : x_n \in D, n \geq 1\},\quad  t \in [\zeta,t_2].
\]

Then, one has
\begin{lemma}\label{Lemma:4}
    Assume that all the conditions of Theorem \ref{Theorem:1} are satisfied. Then for each $t \in (\zeta,t_2]$, $D(t)$ is relatively compact in $Z$ and $D$ is equicontinuous on $(\zeta,t_2]$.
\end{lemma}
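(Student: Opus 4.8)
The plan is to establish both assertions by re-running the complete-continuity estimates from the proof of Lemma \ref{Lemma:3}, but now holding the family $\{x_n\}$ fixed and tracking every bound uniformly in $n$. The starting observation is that the proof of Lemma \ref{Lemma:3} produces a single radius $r>0$ with $Q_n(W_r)\subseteq W_r$ for every $n$, so each fixed point satisfies $x_n\in W_r$; consequently $D\subseteq W_r$, $\sup_n\|x_n\|_{C([\zeta,t_2],Z)}\le r$, and by $(H_1)$–$(H_2)$ we have $\|g(x_n)\|\le Lr$ and $\|F(s,x_n(b(s)))\|\le h_r(s)$ for a.e.\ $s$, with $\|H(\cdots)(s)\|$ dominated by a fixed $L^2$-function, all independently of $n$. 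I would record these uniform bounds first, since every subsequent estimate rests on them.

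For the relative compactness of $D(t)$ with $t\in(\zeta,t_2]$, I would imitate Step 3 of Lemma \ref{Lemma:3}. Writing $x_n(t)=Q_nx_n(t)$ and splitting off the leading term $\Psi_\alpha(t,\zeta)[x_0-\Psi_\alpha(\tfrac{(n+1)\zeta}{n},\zeta)g(x_n)]$, I note that the bracketed vectors form a bounded set and that $\Psi_\alpha(t,\zeta)$ is compact because $t>\zeta$ (property (D), as used in Lemma \ref{Lemma:3}); hence this term ranges over a relatively compact set. For the integral term I would introduce the truncations $x_n^\epsilon(t)$ with integration over $[\zeta,t-\epsilon]$ exactly as before, use the evolution property (B) to factor $\Psi_\alpha(t,s)=\Psi_\alpha(t,t-\epsilon)\Psi_\alpha(t-\epsilon,s)$ together with the compactness of $\Psi_\alpha(t,t-\epsilon)$ to obtain relative compactness of $\{x_n^\epsilon(t)\}$, and finally bound $\|x_n(t)-x_n^\epsilon(t)\|$ by the same $(s^{\alpha-1})$-weighted estimate as in Lemma \ref{Lemma:3}, which tends to $0$ as $\epsilon\to0^+$ uniformly in $n$ thanks to the uniform domination above. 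Relative compactness of $D(t)$ then follows from the standard criterion that $D(t)$ is approximated arbitrarily closely by relatively compact sets.

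For the equicontinuity of $D$ on $(\zeta,t_2]$, I would fix $\zeta<t'<t''\le t_2$ and decompose $\|x_n(t'')-x_n(t')\|\le I_1+I_2+I_3+I_4$ precisely as in Step 2 of Lemma \ref{Lemma:3}. The terms $I_3$ and $I_4$ are handled verbatim by the weighted-$L^2$ estimates, which are uniform in $n$ since the integrands are dominated by $h_r$ and a fixed $L^2$-bound on $\|H(\cdots)\|$; the term $I_2$ tends to $0$ by Lemma \ref{lemma:1} (continuity of $\Psi_\alpha(\cdot,s)$ in the uniform operator topology, uniformly in $s\in[\zeta,t'-\epsilon]$) together with the uniform integrable bound on its integrand. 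The one place that genuinely differs from Lemma \ref{Lemma:3} is $I_1=\|[\Psi_\alpha(t'',\zeta)-\Psi_\alpha(t',\zeta)]\,y_n\|$ with $y_n:=x_0-\Psi_\alpha(\tfrac{(n+1)\zeta}{n},\zeta)g(x_n)$.

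The anticipated main obstacle lies exactly in $I_1$. In Lemma \ref{Lemma:3} the analogous set could be shown compact because, for a fixed $n$, $\Psi_\alpha(\tfrac{(n+1)\zeta}{n},\zeta)$ is a compact operator. That route is unavailable here, since as $n\to\infty$ one has $\tfrac{(n+1)\zeta}{n}\to\zeta^+$ and $\Psi_\alpha(\tfrac{(n+1)\zeta}{n},\zeta)\to I$ strongly, so $\{y_n\}$ is only bounded, not relatively compact. I would resolve this by exploiting $t'>\zeta$: since $t',t''>\zeta$, Lemma \ref{lemma:1} yields $\|\Psi_\alpha(t'',\zeta)-\Psi_\alpha(t',\zeta)\|\to0$ in the operator norm as $t''\to t'$, whence $I_1\le\|\Psi_\alpha(t'',\zeta)-\Psi_\alpha(t',\zeta)\|\,\sup_n\|y_n\|\to0$ uniformly in $n$ using only boundedness of $\{y_n\}$. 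This same reasoning explains why equicontinuity is asserted only on $(\zeta,t_2]$: at $t'=\zeta$ the difference $\Psi_\alpha(t,\zeta)-I$ converges merely strongly, so without relative compactness of $\{y_n\}$ the estimate cannot be made uniform in $n$ up to the endpoint $\zeta$.
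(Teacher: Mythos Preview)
Your proposal is correct and follows precisely the approach the paper intends: the paper's own proof of Lemma~\ref{Lemma:4} is simply ``similar to that of Lemma~\ref{Lemma:3}, and we omit it,'' and your argument is exactly that, carried out with the necessary care about uniformity in $n$. Your identification of the one genuinely new point---that $I_1$ must now be handled via the operator-norm continuity of Lemma~\ref{lemma:1} (since $\{y_n\}$ is only bounded, not relatively compact) rather than via strong continuity plus compactness of the initial data---is spot on, and correctly accounts for the restriction to $(\zeta,t_2]$ in the statement.
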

\begin{proof}
    The proof is similar to that of Lemma \ref{Lemma:3}, and we omit it.
\end{proof}

\begin{proof}[\textbf{Proof of Theorem \ref{Theorem:1}}]\textbf{Part 2 :}

 We establish that the solution set $D$ of Eq. (\ref{eq:9}) is relatively compact in the space $ \mathbb{C}([\zeta, t_2]; Z)$. To demonstrate this, we focus on two key conditions: verifying that $D(\zeta)$ is relatively compact in $Z$ and confirming that $D$ exhibits equicontinuity at $t = \zeta$, as outlined in Lemma \ref{Lemma:4}.

Let $x_n \in D$ for all $n \geq 1$, and define the sequence $\bar{x}_n(t)$ as follows:
\[
\bar{x}_n(t) =
\begin{cases}
x_n(\delta), & \text{if } t \in [\zeta, \delta], \\
x_n(t), & \text{if } t \in [\delta, t_2],
\end{cases}
\]
where $\delta$ is chosen according to condition $(H_2)$. Under this condition, we have $g(x_n) = g(\bar{x}_n)$.

Furthermore, invoking Lemma \ref{Lemma:4}, it follows that the sequence $\{\bar{x}_n(\cdot) : n \in \mathbb{N}\}$ is relatively compact in $ \mathbb{C}([\zeta, t_2]; Z)$. Without loss of generality, assume that $\bar{x}_n(\cdot)$ converges to some $x^1(\cdot)$ in $ \mathbb{C}([\zeta, t_2]; Z)$ as $n \to \infty$. Utilizing the continuity of $\Psi_\alpha(t,s)$ and $g$, we then obtain

\begin{align*}
    &\biggl \lVert x_n(\zeta) - [x_0 - g(x^1)] \biggr \rVert = \biggl \lVert x_0 - \Psi_\alpha\left(\frac{(n+1)\zeta}{n}, \zeta\right) g(x_n) - x_0 + g(x^1)\biggr \rVert\\
    &\leq \biggl \lVert \Psi_\alpha\left(\frac{(n+1)\zeta}{n}, \zeta\right) g(x_n) - \Psi_\alpha\left(\frac{(n+1)\zeta}{n}, \zeta\right) g(x^1)\biggr \rVert\\
    &\quad + \biggl \lVert\Psi_\alpha\left(\frac{(n+1)\zeta}{n}, \zeta\right) g(x^1) - g(x^1)\biggr \rVert\\
    &\leq M \biggl \lVert g(\bar{x}_n(t)) - g(x^1)\| + \biggl \lVert\Psi_\alpha\left(\frac{(n+1)\zeta}{n}, \zeta\right) g(x^1) - g(x^1)\biggr \rVert\\
    &\to 0 \text{ as } n \to \infty.
\end{align*}

So, $D(\zeta)$ is relatively compact in $Z$.

On the other hand, as $D(\zeta) = \{x_0 - \Psi_\alpha\left(\frac{(n+1)\zeta}{n}, \zeta\right) g(x_n) : x_n \in D\}^\infty_{n=1}$ is relatively compact in $Z$, for $t \in (\zeta,t_2]$, we have
\begin{align*}
    &\|x_n(t) - x_n(\zeta)\|\\
    & \leq \Bigg\lVert \Psi_\alpha(t,\zeta)x_0 - x_0 + \Psi_\alpha\left(t, \zeta \right)\Psi_\alpha\left(\frac{(n+1)\zeta}{n}, \zeta\right)g(x_n) - \Psi_\alpha\left(\frac{(n+1)\zeta}{n}, \zeta\right)g(x_n)\Bigg\rVert\\
    &+ \Bigg\lVert \int_\zeta^t \Psi_\alpha(t,s) \left[ BH \left( x_0 - \Psi_\alpha\left( \frac{(n+1)\zeta}{n}, \zeta\right)g(x_n), F\right)(s) + F(s, x_n(b(s))) \right] \frac{ds}{s^{1-\alpha}} \Bigg\rVert\\
    &\leq \|\Psi_\alpha(t,\zeta)x_0 - x_0\| + \|\Psi_\alpha(t,\zeta) - I\| \left\lVert \Psi_\alpha\left(\frac{(n+1)\zeta}{n}, \zeta\right)g(x_n) \right\rVert\\
    &\quad+ \int_\zeta^t \Psi_\alpha(t,s) \left[ BH_\alpha\left( x_0 - \Psi_\alpha\left( \frac{(n+1)\zeta}{n}, \zeta \right) g(x_n), F\right)(s)  + F(s, x_n(b(s))) \right] \frac{ds}{s^{1-\alpha}}\\
    &\to 0.
\end{align*}

uniformly for all \( n \in \mathbb{N} \) as \( t \to \zeta \). This confirms that the set \( D \subseteq \mathbb{C}([\zeta, t_2]; Z) \) is equicontinuous at \( t = \zeta \). Consequently, \( D \) is relatively compact in \( \mathbb{C}([\zeta, t_2]; Z) \). Therefore, we can assume that \( x_n \to x^* \in \mathbb{C}([\zeta, t_2]; Z) \) as \( n \to \infty \) for some limit function \( x^* \).

Because
\begin{align*}
x_n(t) &= \Psi_\alpha(t,\zeta) \left[ x_0 - \Psi_\alpha\left( \frac{(n+1)\zeta}{n}, \zeta \right) g(x_n) \right] \\
&\quad + \int_\zeta^t \Psi_\alpha(t,s) \bigg[ - BH_\alpha\left( x_0 - \Psi_\alpha\left( \frac{(n+1)\zeta}{n}, \zeta \right) g(x_n), F \right)(s) \\
&\quad + F(s, x_n(b(s))) \bigg] \frac{ds}{s^{1-\alpha}}.
\end{align*}

taking the limit as $n \to \infty$ on both sides, we obtain
\begin{align*}
    &x^*(t) = \Psi_\alpha(t,\zeta) \left[ x_0 - g(x^*) \right]\\
    &+ \int_\zeta^t \Psi_\alpha(t,s) \bigg[ - BH_\alpha\bigg( x_0 - g(x^*), F \bigg)(s) + F(s, x^*(b(s))) \bigg] \frac{ds}{s^{1-\alpha}}
\end{align*}

for $\zeta \leq t \leq t_2$, which proves that $x^*(\cdot)$ is a mild solution of (\ref{eq:P}). Consequently, system (\ref{eq:P}) is exactly null controllable on $[\zeta,t_2]$.

\end{proof}

\section{Application}
\label{sec:Application}
As an illustration of Theorem \ref{Theorem:1}, we investigate the following fractional-order partial differential control system:
\begin{equation}\label{eq:12}
    \begin{cases}
\frac{\partial^\alpha}{\partial t^\alpha}w(x,t) = \frac{\partial^2}{\partial x^2}w(x,t) + p(t)w(x,t) + u(x,t) + h(t, w(x, b(t))),\quad \alpha \in (0,1], \\
w(0,t) = w(\pi,t) = 0, \quad t \in [0,T],  \\
w(x,\zeta) + \sum_{i=1}^{q}c_iw(x, t'_i) = w_0, \quad x \in [0, \pi], \zeta \in [0,T] \cap [0,(\alpha T)^{\frac{1}{\alpha}}],
\end{cases}
\end{equation}

Here, the time delays $t'_i \ (i = 1,2,\dots,q) \in (\zeta,T)$, and the functions $p(t): [\zeta,T] \to \mathbb{R}^+$ and $b(\cdot): [\zeta,T] \to [\zeta,T]$ are continuous. Define $Z = L^2([0,\pi])$, and introduce the operator family $A(t)$ as follows (see \cite{curtain2012introduction}, page 131):
\begin{equation}
    A(t)z = z'' + p(t)z
\end{equation}
with a common domain:
\[
D_A = \{ z(\cdot) \in Z : z, z' \text{ are absolutely continuous}, z'' \in Z, \text{ and } z(0) = z(\pi) = 0 \}.
\]

Next, we verify that the operator $A(t)$ generates an evolution family $\Psi_\alpha(t, s)$ satisfying the assumptions (a)–(d). The evolution operator is given by:
\[
\Psi_\alpha(t, s) = S_s(t-s)\exp\left(-\int_{\frac{s^\alpha}{\alpha}}^{\frac{t^\alpha}{\alpha}} p(\tau)\,d(\tau,\alpha)\right),
\]
where $S_s(t)$ for $\frac{t^\alpha}{\alpha} \in [0,(\alpha T)^{\frac{1}{\alpha}}]$ is a compact $C_0$-semigroup generated by the operator $A$, with
\[
Az = z''.
\]

For any $z \in D_A$, the following expansion is valid. It is easy to verify that the operator $A$ has a discrete spectrum with eigenvalues $-n^2$ for $n \in \mathbb{N}$, and the corresponding normalized eigenfunctions are given by $e_n(x) = \sqrt{2} \sin(n x)$. Thus, for $z \in D_A$ (see \cite{curtain2012introduction}, p.~29), we can write:
\begin{equation}\label{eq:A(t)}
    A(t)z = \sum_{n=1}^\infty (-n^2 - p(t))\langle z, e_n \rangle e_n.
\end{equation}

The common domain of definition for all involved operators coincides with that of $A$. Furthermore, for each $z \in Z$, the representation of $\Psi_\alpha(t, s)z$ is given as follows.

\[ \Psi_\alpha(t, s)z = \sum_{n=1}^\infty e^{-n^2(\frac{t^\alpha}{\alpha}-\frac{s^\alpha}{\alpha})-\int_\frac{s^\alpha}{\alpha}^\frac{t^\alpha}{\alpha} p(\tau)\,d(\tau,\alpha)} \langle z, e_n \rangle e_n. \]

Therefore, for every $z \in Z,$
\begin{align*}
    & \Psi_\alpha(t, s)z =\Psi_\alpha^*(t, s)z \\
    &= \sum_{n=1}^\infty e^{-n^2(\frac{t^\alpha}{\alpha}-\frac{s^\alpha}{\alpha})-\int_\frac{s^\alpha}{\alpha}^\frac{t^\alpha}{\alpha} p(\tau)\,d(\tau,\alpha)} \sin(nx) \int_{0}^{\pi} z(\beta) \sin(n\beta) \, d\beta, \quad z \in Z,
\end{align*}

To reformulate system~(\ref{eq:12}) in the structure of~(\ref{eq:P}), we define the mappings $F : [\zeta,T] \times Z \to Z$ and $g : C([\zeta,T], Z) \to Z$ by
\[
F(t,w) = h(t, w(x,t)), \quad g(w(x,t)) = \sum_{i=1}^{q} c_i w(x, t'_i).
\]
It is clear that $g(\cdot)$ fulfills the condition $(H_2)$ since we can choose $\delta = t'_1$. Regarding the function $h : [\zeta,T] \times \mathbb{R} \to \mathbb{R}$, we assume the following:
\begin{itemize}
    \item[(i)] For each $x \in \mathbb{R}$, the mapping $h(\cdot, x)$ is measurable on $[\zeta,T]$;
    \item[(ii)] For every fixed $t \in [\zeta,T]$, the function $h(t, \cdot)$ is continuous, and there exists a constant $c \geq 0$ such that
    \[
    |h(t,x)| \leq c|x|, \quad \text{for all } t \in [\zeta,T].
    \]
\end{itemize}

Under these assumptions, $F(t,w)$ satisfies condition $(H_1)$. Let us now take $u \in L^2([0,1], X)$, and set $B = I$, which gives $B^* = I$. Therefore, system~(\ref{eq:12}) can be rewritten in the form of~(\ref{eq:P}). Next, we consider the linear system with an additional term $\phi \in L^{\frac{1}{\xi}}([0,\pi], Z)$:

\begin{equation}
\begin{cases}
\frac{\partial^{\alpha} z(x, t)}{\partial t^{\alpha}} = \frac{\partial^2 z}{\partial x^2}(x, t) + p(t)z(x, t) + \mu(x, t) + \phi(t), \quad \alpha \in (0,1]  \\
z(0, t) = z(\pi, t) = 0, \quad t\in [0,T], \\
z(x, \zeta) = z_0, \quad x \in [0, \pi],\quad \zeta\in [0,T]\cap [0,(\alpha T)^{\frac{1}{\alpha}}],
\end{cases}
\label{eq:13}
\end{equation}

According to Remark \ref{remark:1}, the exact null controllability of Equation (\ref{eq:13}) is equivalent to the existence of a constant $\gamma > 0$ satisfying the inequality
\begin{align*}
    \int_{\zeta}^{T} \|B^*\Psi_\alpha^*(T,s)z\|^2 d(s,\alpha) \geq \gamma \left(\| \Psi_\alpha^*(T,\zeta)z\|^2 + \int_{\zeta}^{T} \|\Psi_\alpha^*(T,s)z\|^2 d(s,\alpha) \right),
\end{align*}
which is also equivalent to
\begin{equation}
    \int_{\zeta}^{T} \|\Psi_\alpha(T,s)z\|^2 d(s,\alpha) \geq \gamma \left(\| \Psi_\alpha(T,\zeta)z\|^2 + \int_{\zeta}^{T} \|\Psi_\alpha(T,s)z\|^2 d(s,\alpha) \right).
    \label{eq:14}
\end{equation}

In the work of Curtain and Zwart \cite{curtain2012introduction}, it is demonstrated that the linear system given by (\ref{eq:13}), with $\phi = 0$, is exactly null controllable provided that
\[
\int_{\zeta}^{T} \|\Psi_\alpha(T,s)z\|^2 d(s,\alpha) \geq (T - \zeta) \| \Psi_\alpha(T,\zeta)z\|^2.
\]
From this, one can derive the inequality
\[
\frac{1}{1+(T-\zeta)} \int_{\zeta}^{T} \|\Psi_\alpha(T,s)z\|^2 d(s,\alpha) \geq \frac{T-\zeta}{1+(T-\zeta)} \| \Psi_\alpha(T,\zeta)z\|^2,
\]
which leads to
\[
\int_{\zeta}^{T} \|\Psi_\alpha(T,s)z\|^2 d(s,\alpha) \geq \frac{T-\zeta}{1+(T-\zeta)} \left(\| \Psi_\alpha(T,\zeta)z\|^2 + \int_{\zeta}^{T} \|\Psi_\alpha(T,s)z\|^2 d(s,\alpha) \right).
\]

Thus, setting $\gamma = \frac{T - \zeta}{(T - \zeta) + 1}$ ensures that inequality (\ref{eq:14}) holds. As a result, the system represented by Equation (\ref{eq:13}) is exactly null controllable on the interval $[\zeta, T]$. By Theorem \ref{Theorem:1}, this implies that the system described by Equation (\ref{eq:12}) also achieves exact null controllability over the same interval, provided the condition stated in inequality (\ref{eq:8}) is fulfilled.

\begin{remark}
  A semilinear function that meets the conditions $H_1$ and $H_3$ is given by
  \[
  h(t, w(x, b(t))) = \sin(x) + (b(t))^{2\alpha} \sin(2x),
  \]
  where $b(t) = \cos(t)$. Furthermore, by choosing $g(x) = \cos^2(x)$, which satisfies condition $H_2$, and $p(t) = t^3 + t^2 + 1$, the system (\ref{eq:12}) becomes exactly null controllable.
\end{remark}

\bibliographystyle{unsrtnat} 
\bibliography{references} 

\end{document}